\newcommand{\R}{\mathbb{R}}
\newcommand{\Rr}{\mathcal{R}}
\newcommand{\B}{\mathcal{B}}
\newcommand{\F}{\mathcal{F}}
\newcommand{\Ss}{\mathcal{S}}
\newcommand{\M}{\mathcal{M}}
\newcommand{\E}{\mathcal{E}}
\DeclareMathOperator{\Sp}{Sp}
\DeclareMathOperator{\spn}{span}
\DeclareMathOperator{\pos}{pos}
\providecommand{\norm}[1]{\lVert#1\rVert}
\newtheorem{theorem}{Theorem}[section]
\newtheorem{definition}[theorem]{Definition}
\newtheorem{prop}[theorem]{Proposition}
\newtheorem{lemma}[theorem]{Lemma}
\title{A four-dimensional body of constant width}
\author[M.G. Mercado-Flores]{Marcela G. Mercado-Flores}
\address[M.G. Mercado-Flores]{Centro de Ciencias Matemáticas, UNAM Campus Morelia, Morelia, Mexico}
\email{mmercado@matmor.unam.mx}
\author[M. Raggi]{Miguel Raggi}
\address[M. Raggi]{ENES, UNAM Campus Morelia, Morelia, Mexico}
\email{mraggi@gmail.com}
\author[E. Roldán-Pensado]{Edgardo Roldán-Pensado}
\address[E. Roldán-Pensado]{Centro de Ciencias Matemáticas, UNAM Campus Morelia, Morelia, Mexico}
\email{eroldan@matmor.unam.mx}
\begin{document}
	
	\begin{abstract}
		The study of bodies of constant width is a classical subject in convex geometry, with the $3$-dimensional Meissner bodies being canonical examples. This paper presents a novel geometric construction of a body of constant width in $\R^4$, addressing the challenge of constructing such bodies in higher dimensions. Our method produces a natural analogue of the second Meissner body, by modifying a 4-dimensional Reuleaux simplex. The resulting body possesses tetrahedral symmetry and has a boundary composed of both smooth surfaces and a non-smooth subset of the Reuleaux 4-simplex.
        
        Furthermore, we analyze the orthogonal projection of this body onto the 3-dimensional hyperplane of its base. This ``shadow'' is a 3-dimensional body of constant width with tetrahedral symmetry. It has six elliptical edges and its volume is only slightly larger than that of the Meissner bodies. This body was recently constructed as a projection of a different 4-dimensional body, however the construction presented here is new and gives additional properties.
	\end{abstract}
	
	\maketitle
	
	\section{Introduction}
	
	\subsection{History}
	
	Bodies of constant width are fundamental objects in convex geometry that have been extensively studied in dimensions two and three, due to their rich structure and numerous applications in areas such as engineering, optimization, and graph theory. A convex body in $\R^n$ is said to have constant width if the distance between every pair of parallel supporting hyperplanes is the same in all directions. This concept has been thoroughly explored in both classical and modern literature (see, for example, \cite{CG1983}, \cite{HM1993}, \cite{MMO2019}).
	
	In $\R^3$, classical examples include bodies of revolution obtained from symmetrical bodies of constant width in dimension two, the Meissner bodies \cite{Mei1918} which are obtained from the Reuleaux tetrahedron by modifying certain edges by ``rounding them'', as well as the infinite family of constant width bodies constructed in \cite{MR2016}. These bodies have been studied not only for their theoretical significance, but also because of their rich combinatorial structure, which is explored in \cite{MPRR2020}, as well as their connections to metric and volume related problems.
	
	Depending on which edges are rounded, two distinct types of Meissner bodies can be obtained. To construct the first type, the three edges of the Reuleaux tetrahedron that meet at a common vertex are rounded. To obtain the second type, the three edges belonging to one of its faces are rounded instead.
	
	More recently, several researchers have explored the possibility of extending these objects to higher dimensions. A notable example is the construction developed in \cite{AMO2023}, where the notion of a \emph{peabody}, a special type of body of constant width in $\R^3$, was introduced. These bodies make use of the so-called focal conics, which possess key properties that facilitate the construction by replacing the singularities of the Reuleaux tetrahedron with subarcs of focal conics. This construction was generalized to $\R^4$ in \cite{AMO2024}, where the authors introduce focal quadrics and construct a body of constant width in four dimensions that exhibits all the symmetries of the 4-dimensional simplex. Notably, this body is unique. That is, it does not define a family of bodies of constant width.
	
	In \cite{LO2007}, bodies of constant width in higher dimensions are constructed from those in lower dimensions. However, this construction depends on arbitrary intersections and provides little information about the structure of the resulting bodies.

    More recently, in \cite{ABNPR2025}, a simple construction of a body of constant width $2$ is given for every dimension $n\ge 2$. The volume of this body is shown to be strictly less than $(0.9)^n$ times that of the $n$-dimensional unit ball.
	
	\subsection{A new body of constant width}
	
	The present work aims to provide a more explicit construction for a constant width body in four dimensions that emphasizes the geometric structure of the body, proposing a variation of the approaches found in the literature. Our contribution is based on a geometric approach inspired by the $3$-dimensional case, but adapted to four dimensions.
	
	The definitions are made precise in Section \ref{sec:def}.
	
	\begin{theorem}\label{thm:main}
		Let $\Rr$ be a unit $4$-dimensional Reuleaux simplex with vertices $A,B,C,D,E$. There is a set $\M_0$ containing $\{A,B,C,D,E\}$ and contained in the $2$-skeleton of $\Rr$ (formed by parts of $2$-dimensional spheres) such that:
		\begin{enumerate}
			\item The body $\M$ obtained as the intersection of all unit balls centered at points in $\M_0$ is of constant width.
			\item $\M_0$, and therefore $\M$, is fixed by any isometry that fixes $E$ and permutes $A,B,C,D$.
			\item $\M_0$ is contained in the boundary of $\M$.
			\item Every point in $\partial\M\setminus\M_0$ is smooth.
			\item Every diameter of $\M$ has at least one endpoint in $\M_0$.
		\end{enumerate}
	\end{theorem}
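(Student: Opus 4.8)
The plan is to deduce Theorem~\ref{thm:main} from the classical description of constant width: a convex body $K\subseteq\R^n$ with $\operatorname{diam}K\le1$ has constant width $1$ if and only if it is \emph{diametrically complete}, i.e.\ $K=\bigcap_{p\in K}B(p,1)$. Note that $\M=\bigcap_{p\in\M_0}B(p,1)$ is automatically convex and, containing the affinely independent points $A,B,C,D,E$, is a body. If $\M_0\subseteq\M$ then $\bigcap_{p\in\M}B(p,1)\subseteq\bigcap_{p\in\M_0}B(p,1)=\M$, while the reverse inclusion $\M\subseteq\bigcap_{p\in\M}B(p,1)$ is just a restatement of $\operatorname{diam}\M\le1$; so as soon as $\M_0\subseteq\M$ and $\operatorname{diam}\M\le1$ are known we get $\M=\bigcap_{p\in\M}B(p,1)$ and hence part (1). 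Thus everything reduces to: (a)~exhibiting such an $\M_0$ inside the $2$-skeleton with the stated symmetry; (b)~$\operatorname{diam}\M_0\le1$, which gives $\M_0\subseteq\M$, and then part (3) (each $p\in\M_0$ lies on some sphere $\partial B(V,1)$ with $V$ a vertex, hence $V\in\M_0$ and $\M\subseteq B(V,1)$, so $p\in\M\cap\partial B(V,1)\subseteq\partial\M$); (c)~$\operatorname{diam}\M\le1$; and (d)~the smoothness statement (4), from which (5) follows quickly.

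For (a) I would imitate the \emph{second} Meissner body, which rounds the three edges of one face of the Reuleaux tetrahedron. The analogue is to round the six edges ($1$-faces) of the facet of $\Rr$ opposite $E$, i.e.\ of the piece of $\partial B(E,1)$ spanned by $A,B,C,D$. Concretely, for a pair $\{P,Q\}\subseteq\{A,B,C,D\}$ let $\sigma_{PQ}$ be the $2$-cell of $\Rr$ lying on the $2$-sphere $\partial B(P,1)\cap\partial B(Q,1)$: it is a spherical triangle whose vertices are the other three of $A,B,C,D,E$ (so $E$ is always among them) and whose two poles $P,Q$ lie at distance exactly $1$ from that whole $2$-sphere. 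Rounding those six edges amounts to taking $\M_0$ to consist of the five vertices together with the six cells $\sigma_{PQ}$, plus — as (4) will force — the parts of the remaining four $2$-cells $\sigma_{PE}$ and of the four edges incident to $E$ that survive on $\partial\M$; adding these extra centres is harmless once $\operatorname{diam}\M\le1$ is known, since they are then already within distance $1$ of all of $\M$. This $\M_0$ is visibly invariant under the group of isometries fixing $E$ and permuting $A,B,C,D$, as that group permutes the cells $\sigma_{PQ}$ and hence the balls $\{B(p,1):p\in\M_0\}$ and so fixes $\M$, giving (2). For (b), $\M_0\subseteq\partial\Rr\subseteq B(V,1)$ for every vertex $V$, so all vertex-to-$\M_0$ distances are $\le1$ (and $=1$ at the poles); what is left is to bound the distance between two of the cells $\sigma_{PQ}$, which, after using the symmetry to reduce to the three cases (two cells sharing a pole, two cells sharing no pole, a cell with itself), is a finite list of explicit optimizations over products of small spherical triangles.

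The main obstacle is (c), $\operatorname{diam}\M\le1$. The plan is to stratify $\partial\M$ into finitely many pieces — the surviving parts of the five spherical facets $\partial B(V,1)$, the six smooth ``rounded'' sheets bounding the bodies $\bigcap_{p\in\sigma_{PQ}}B(p,1)$ that replace the rounded edges, and the lower-dimensional seams between them — and then, after using the symmetry to cut the list of pairs of strata down to a few representatives, to show that any two points on (the closures of) two such strata are at distance at most $1$. The extremal configurations are precisely the ones that made $\Rr$ fail to have constant width: points near the sharp $1$-skeleton of the Reuleaux simplex together with points on the opposite, now shaved, features. The construction is designed so that exactly these distances come out equal to $1$: the ``pole at distance $1$'' property of $\sigma_{PQ}$ controls the distance from the sheet it produces to the feature opposite to it; the requirement that each rounded sheet meet the adjacent spherical facet tangentially (as for Meissner's body, and something that must itself be checked) prevents any overshoot along the seams; and the unrounded region around $E$ is paired against $E$ and the facet opposite it, where the width is again $1$. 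This is an explicit but lengthy computation in coordinates, and it is where a hidden obstruction would most plausibly surface: one must make sure that no sheet–sheet pair, no sheet–facet pair, and no pair straddling the leftover region near $E$ quietly exceeds length $1$.

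For (4): a boundary point $x$ of $\M=\bigcap_{p\in\M_0}B(p,1)$ is non-smooth only when the outer unit normals $\{x-p:p\in\M_0,\ |x-p|=1\}$ span more than a ray. In the interior of a rounded sheet $x$ lies on only one of the defining spheres (it is an envelope point), so the sheet is smooth there; a rounded sheet meets the adjacent spherical facet tangentially; and a spherical facet is smooth in its interior. Hence the non-smooth points of $\partial\M$ are exactly those on two of the facets $\partial B(P,1),\partial B(Q,1)$ (or on transversally meeting sheets), which lie on the $2$-skeleton of $\Rr$; requiring these all to belong to $\M_0$ is exactly what forces $\M_0$ to include the surviving parts of the cells $\sigma_{PE}$ and of the edges at $E$. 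Finally (5): if $[x,y]$ is a diameter of $\M$, so $|x-y|=\operatorname{diam}\M=1$, and $x\notin\M_0$, then by (4) $x$ is a smooth boundary point; it lies on $\partial B(p,1)$ for some $p\in\M_0$ (as $\M_0$ is compact), with unique outer normal the unit vector $x-p$; but for a diameter the outer normal at $x$ must be $(x-y)/|x-y|=x-y$, so $p=y$ and $y\in\M_0$.
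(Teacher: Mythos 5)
Your high-level reduction (diametral completeness / Pál-type argument from $\mathrm{diam}\,\M\le 1$ plus smoothness off $\M_0$) parallels the paper, but the construction of $\M_0$ itself is wrong, and this is fatal to the plan. You take for $\M_0$ the five vertices together with the six \emph{full} $2$-cells $\sigma_{PQ}$, i.e.\ the complete spherical triangles $F_{Z,W,E}$ incident to $E$. Each such triangle contains in its boundary the base edge arc $F_{Z,W}$ of $\Rr$, and the union of these six triangles has diameter strictly greater than $1$: the midpoint of the arc $F_{A,B}$ and the midpoint of the opposite arc $F_{C,D}$ are at distance $\approx 1.019>1$ (in the paper's coordinates they are $(x,x,y,y)$ and $(y,y,x,x)$ with $x\approx 0.397$, $y\approx -0.113$). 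This is the four-dimensional analogue of the fact that the midpoints of opposite edges of the Reuleaux tetrahedron are at distance $\sqrt{3}-\sqrt{2}/2>1$, i.e.\ of the classical fact that one cannot round all six edges and keep constant width. Consequently your step (b) fails, $\M_0\not\subset\M$, part (3) is false for your set, and your completeness argument (which needs $\M_0\subseteq\M$ to get $\M=\bigcap_{p\in\M}\B(p,1)$) cannot even start; moreover, since balls centered on a base arc shave $\Rr$ near the opposite base arc and vice versa, the width of your $\M$ in those directions drops below $1$, so (1) itself is in doubt. Your remark that (4) ``forces'' adding parts of the four cells $\sigma_{PE}$ and of the edges at $E$ is an artifact of this wrong generating set.

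The missing idea, which is the crux of the paper's construction, is a truncation: one does not keep the full faces incident to $E$, but only $\F_{X,Y}$, the part of $F_{X,Y,E}$ lying on the same side of the plane $\spn(\{X,Y\})$ as $E$, equivalently $\F_{X,Y}=\pos(\{X,Y,E\})\cap\Ss(M_{Z,W},\sqrt{3}/2)$. The new boundary arc $S_{X,Y}$ then lies in the plane $\spn(\{X,Y\})$ at norm $1/\sqrt{2}$ from the origin $O$ of the orthogonal frame, so opposite arcs $S_{A,B}$ and $S_{C,D}$ sit in orthogonal planes and are at distance exactly $1$; with this truncation a case analysis (same component, adjacent, opposite, using a monotonicity lemma for distances to points on spheres) gives $\mathrm{diam}\,\M_0=1$, hence $\M_0\subset\partial\M$, and the rest of the paper's proof establishes that each $P\in\partial\M\setminus\M_0$ has a \emph{unique} point of $\M_0$ at distance $1$ (via spindle geometry and a finite list of polyhedral constraints), from which (4), (5) and then constant width via Pál's theorem follow — broadly the route you envisage, but it only works for the truncated set. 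Separately, even granting the construction, your steps (c) and the tangency claims are only asserted as ``lengthy computations,'' so they would still need the kind of detailed verification the paper supplies.
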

	
	The body $\M$, as stated in Theorem \ref{thm:main}, can be seen as a natural analogue of the second Meissner body. In the $3$-dimensional case, this Meissner body is typically described by modifying the three edges forming a single triangular face of the Reuleaux tetrahedron. Equivalently, this corresponds to preserving the three edges that meet at the vertex opposite that face. In fact, the Meissner body can be constructed as the intersection of all unit balls whose centers lie on these three \emph{preserved} edges (see Figure \ref{fig:Meissner}). In the $4$-dimensional case, the generating set $\M_0$ plays the role of these preserved edges. The modifications to the initial Reuleaux simplex $\Rr$ are performed in a neighborhood of the ``base'' facet with vertices $\{A,B,C,D\}$. Correspondingly, the preserved set $\M_0$ is composed primarily of portions of the 2-skeleton associated with the opposing apex, $E$.
	
	\begin{figure}
		\centering
		\includegraphics{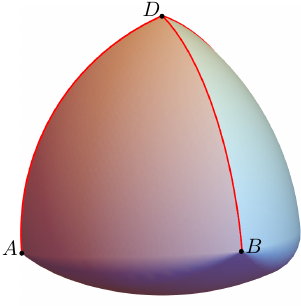}
		\caption{The second Meissner body, the non-smooth circular edges are marked in red.}
		\label{fig:Meissner}
	\end{figure}
	
	\subsection{The shadow of \texorpdfstring{$\M$}{M}}
	
	Projecting the body $\M$ onto the affine hyperplane spanned by the vertices $\{A,B,C,D\}$ yields a $3$-dimensional body of constant width, which we refer to as the \emph{shadow} of $\M$. By construction, this shadow body inherits the symmetries of the regular tetrahedron; that is, it is invariant under any isometry that permutes the vertices $\{A,B,C,D\}$. We believe this shadow body is of independent interest due to its unique geometric properties.
    
    Since this shadow body has non-smooth edges, it cannot be constructed as a Minkowski average of previously known bodies of constant width with smooth edges, such as Meissner bodies or peabodies. However, this body was previously constructed in \cite{ABPR2025} as a projection of the body introduced in \cite{ABNPR2025}. Given that their method of construction differs substantially from ours, we prove that the two bodies coincide and establish some of its properties.
	
	Visually, the shadow body closely resembles the Reuleaux tetrahedron (see Figure \ref{fig:shadow}). Its boundary, like that of the Meissner bodies, is partially composed of patches of spheres. However, it is distinguished by its six elliptical edges and the non-spherical surfaces in their vicinity. For any two vertices $X, Y \in \{A,B,C,D\}$, the corresponding edge is the shorter arc of the unique ellipse with minimal area that is centered at the centroid of the tetrahedron and passes through $X$ and $Y$. Notably, these ellipses each have an eccentricity of $1/\sqrt{2}$, resulting in edges with a lower curvature than their circular counterparts.

	\begin{figure}
		\centering
		\includegraphics{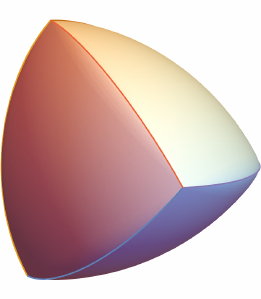}
		\caption{The shadow of $\M$.}
		\label{fig:shadow}
	\end{figure}
	
	\subsection{Paper structure}
	
	The paper is organized as follows. In Section \ref{sec:notation}, we establish the notation to be used throughout the paper. In Section \ref{sec:def}, we present the detailed construction of the $4$-dimensional body of constant width $\M$. The proof that $\M$ has constant width, along with the proofs of the remaining properties stated in Theorem \ref{thm:main}, is presented in Section \ref{sec:constw}. Finally, in Section \ref{sec:shadow}, we analyze the $3$-dimensional body of constant width obtained as the shadow of $\M$.
	
	\section{Notation}\label{sec:notation}
	
	To facilitate readability, we summarize here the notation that will be used throughout the rest of the paper.
	
	\begin{itemize}
		\item $\B(x, r)$: Closed ball of radius $r$ centered at the point $x$.
		\item $\Ss(x, r)$: Sphere of radius $r$ centered at $x$; that is, the boundary of $\B(x, r)$.
		\item $\spn(X)$: The linear space spanned by a point set $X$.
		%\item $\aff(X)$: The affine space spanned by a point set $X$.
		\item $\pos(X)$: The positive cone spanned by point set $X$.
		\item $X^\circ$: The interior of a set $X$ or the relative interior of a lower-dimensional set $X$.
		\item $A, B, C, D, E$: The vertices of a regular 4-dimensional simplex.
		\item $G$: The centroid of the simplex $ABCDE$.
		\item $O$: The origin of the coordinate system, chosen such that the vectors from $O$ to $A$, $B$, $C$ and $D$ form an orthogonal basis.
		\item $M_{X,Y}$: The midpoint of the segment joining points $X$ and $Y$.
		\item $M_{X,Y,Z}$: The centroid of the triangle with vertices $X$, $Y$ and $Z$.
		\item $\Rr$: The 4-dimensional Reuleaux simplex with vertices $A, B, C, D, E$.
		\item $F_{X,Y}$: A 1-dimensional circular arc in $\Rr$ between vertices $X$ and $Y$.
		\item $F_{X,Y,Z}$: A $2$-dimensional spherical face of $\Rr$ determined by $X$, $Y$, and $Z$.
		\item $F_{X,Y,Z,W}$: A $3$-dimensional spherical face of $\Rr$ determined by $X$, $Y$, $Z$, and $W$.
		\item $\M$: The main convex body under study in this paper.
		\item $\F_{X,Y}$: A special subset of the 2-face $F_{X,Y,E}$, used in the construction of $\M$.
		\item $S_{X,Y}$: The circular arc on the plane $\spn(\{X,Y\})$ that, together with $F_{X,E}$ and $F_{Y,E}$, forms the boundary of $\F_{X,Y}$.
		\item $\M_0$: A distinguished subset of the boundary $\partial \M$, defined as the union of the $\F_{X,Y}$ with $X,Y\in\{A,B,C,D\}$ and $X\neq Y$.
		\item $\Sp^k(S, r)$: The spindle defined as the intersection of all closed balls of radius $r$ centered at points of a subset $S$ of a $k$-dimensional sphere.
	\end{itemize}
	
	\section{Definition of the Body} \label{sec:def}
	
	We begin with a regular $4$-simplex with vertices $A, B, C, D, E$ and side length $1$.  
	For convenience, we assume that the vectors $\{A, B, C, D\}$ form an orthogonal basis, each with norm $1/\sqrt{2}$, and that
	\[
	E = \frac{\varphi}{2} (A + B + C + D),
	\]
	where $\varphi = \frac{1 + \sqrt{5}}{2}$ is the golden ratio.
	
	Let 
	\[
	G = \frac{A + B + C + D + E}{5}
	\]
	denote the centroid of the simplex.
	Note that the vectors $G$ and $E$ are aligned.
	
	The Reuleaux simplex $\Rr$ with vertex set $\{A, B, C, D, E\}$ is defined as
	\[
	\Rr = \bigcap_{X \in \{A, B, C, D, E\}} \B(X,1).
	\]
	
	This body shares the same face structure as the regular simplex: its $k$-dimensional faces are $k$-spherical patches, for $k\in\{0,1,2,3\}$. The geometric and combinatorial properties of $\Rr$ have been thoroughly studied and are well understood (see \cite{MMO2019}).
	
	Consider a 2-dimensional face of $\Rr$ incident to vertex $E$, for example, $F_{A,B,E}$. This face lies in the 3-dimensional subspace $\spn(\{A, B, E\})$.
	
	The plane $\spn(\{A, B\})$ divides $F_{A,B,E}$ into two connected regions. We define $\F_{A,B}$ as the region containing vertex $E$. In other words, $\F_{A,B}$ is the subset of $F_{A,B,E}$ lying on the same side of the plane $\spn(\{A, B\})$ as $E$. See Figure~\ref{fig:Fab} for an illustration.
	
	\begin{figure}
		\centering
		\includegraphics{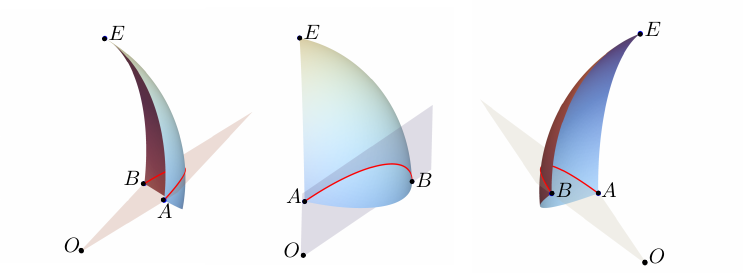}
		\caption{Three views of the face $F_{A,B,E}$ being cut by the plane $\spn(\{A, B\})$ in order to produce $\F_{A,B}$, as seen in the $3$-space generated by $A$, $B$ and $E$.}
		\label{fig:Fab}
	\end{figure}
	
	Note that $F_{A,B,E}$, and therefore $\F_{A,B}$, is contained in the sphere $\Ss(M_{C,D}, \sqrt{3}/2)$, where $M_{C,D}$ denotes the midpoint of $C$ and $D$, since $F_{A,B}$ is contained in the intersection of $\Ss(C,1)$ and $\Ss(D,1)$.
	
	Furthermore, the set $\F_{A,B}$ is bounded by three circular arcs. Two of these are $F_{A,E}$ and $F_{B,E}$, which form the boundary of the 2-face $F_{A,B,E}$. The arc $F_{A,E}$ is contained in the set of points equidistant from $B$, $C$, and $D$, which is precisely $\spn(\{A,E\})$. Likewise, $F_{B,E}$ lies in the plane $\spn(\{B,E\})$. We denote the third arc, which lies in the plane $\spn(\{A,B\})$, by $S_{A,B}$.
	
	Hence, we can express
	\[
	\F_{A,B} = \pos(\{A, B, E\}) \cap \Ss(M_{C,D}, \sqrt{3}/2),
	\]
	where $\pos(\{A,B,E\})$ denotes the positive cone generated by the vectors $A, B,$ and $E$.
	
	Analogously, we define the subsets $\F_{X,Y}$ for the rest of the distinct pairs $X,Y \in \{A,B,C,D\}$, which in turn define the corresponding boundary arcs $S_{X,Y}$.
	
	Let
	\[
	\M_0 = \bigcup_{\substack{X,Y \in \{A,B,C,D\} \\ X \neq Y}} \F_{X,Y},
	\]
	and define our body as
	\begin{align*}
		\M &= \bigcap_{P \in \M_0} \B(P, 1)\\
		&= \bigcap_{\substack{X,Y \in \{A,B,C,D\} \\ X \neq Y}} \bigcap_{P \in \F_{X,Y}} \B(P, 1).
	\end{align*}
	
	Note that, with this definition of $\M$, part (2) of Theorem \ref{thm:main} is satisfied. We prove the remaining parts in the following section.
	
	\section{\texorpdfstring{$\M$}{M} is a body of constant width} \label{sec:constw}
	
	This section is dedicated to proving that $\M$ is a body of constant width and establishing the remaining properties outlined in Theorem \ref{thm:main}. Our proof strategy is organized as follows:
	\begin{itemize}
		\item We begin by showing that the diameter of the generating set $\M_0$ is exactly 1. A direct consequence of this and the construction of $\M$ is that $\M_0$ must be a subset of the boundary $\partial\M$, which proves part (3) of Theorem \ref{thm:main}.
		
		\item Next, we establish a crucial property: for every boundary point $P \in \partial\M\setminus\M_0$, there exists a unique point $Q \in \M_0$ such that the distance $\norm{P-Q}=1$.
		
		\item As a consequence of this uniqueness, we prove two key results: that every point in $\partial\M \setminus \M_0$ is smooth, and that every diameter of $\M$ must have at least one endpoint in $\M_0$. This shows that parts (4) and (5) of Theorem \ref{thm:main} are satisfied.
		
		\item Finally, we use the preceding properties to show that $\M$ has constant width, which completes the proof of Theorem \ref{thm:main}.
	\end{itemize}
	
	\subsection{\texorpdfstring{$\M_0$}{M0} is a subset of \texorpdfstring{$\M$}{M}}\label{sub:diameter}
	First, note that it is enough to show that the diameter of $\M_0$ is 1. Indeed, assume that the diameter is 1. If there is a point $P\in\M_0\setminus \M$, then by the definition of $\M$, there is a point $Q \in \M_0$ such that $P \notin \B(Q, 1)$ which contradicts the assumption that the diameter of $\M_0$ is 1. Therefore $\M_0 \subset \M$. Moreover, since every point $P \in \M_0$ lies on $\partial\Rr$ and $\M\subset\Rr$, $P$ must also lie on $\partial\M$.
	
	Before continuing with the proof, we state the following elementary geometric lemma (see Figure \ref{fig:distsphere}).
	
	\begin{lemma}\label{lem:spheres}
		Let $k<d$ and let $S$ be a $k$-dimensional sphere with center $O$ contained in a $(k+1)$-dimensional subspace $H$ of $\R^d$. Let $P$ be a point such that the projection $P_H$ of $P$ onto $H$ is not $O$. Then the distance from $P$ to a point $Q\in S$ increases as $Q$ moves on $S$ in the direction of the vector $O-P$ (or, equivalently $O-P_H$). To be precise, if $Q,Q'\in S$ are such that $(O-P)\cdot(Q'-Q)>0$, then $\norm{P - Q} < \norm{P - Q'}$.
		
		In particular, the maximum distance from $P$ to any point on $S$ is attained at the point $Q$ for which $O$ lies on the line segment $P_HQ$.
	\end{lemma}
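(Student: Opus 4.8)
The plan is to reduce everything to the position of the closest point of $S$ to $P$ and use convexity of the squared-distance function along the sphere. First I would set up coordinates inside the $(k+1)$-dimensional subspace $H$: write $Q = O + r u$ for $u$ a unit vector ranging over the unit sphere in $H$, where $r$ is the radius of $S$. Then for $P$ with orthogonal projection $P_H$ onto $H$, Pythagoras gives
\[
\|P - Q\|^2 = \|P - P_H\|^2 + \|P_H - Q\|^2 = \|P - P_H\|^2 + \|P_H - O\|^2 + r^2 - 2r\,(P_H - O)\cdot u.
\]
The only term depending on $Q$ (equivalently on $u$) is $-2r\,(P_H-O)\cdot u$, so $\|P-Q\|^2$ is an affine function of $u$ composed with the constraint $u\in\Ss(0,1)\cap H$. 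Since $P_H \neq O$ by hypothesis, the linear functional $u \mapsto (P_H - O)\cdot u$ is non-constant.

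Next I would translate the monotonicity claim. Suppose $Q = O + ru$ and $Q' = O + ru'$ lie on $S$ with $(O - P_H)\cdot(Q' - Q) > 0$, i.e. $(O - P_H)\cdot(u' - u) > 0$, equivalently $(P_H - O)\cdot u' < (P_H - O)\cdot u$. Plugging into the displayed formula, the term $-2r(P_H-O)\cdot u$ is strictly larger when evaluated at $u'$ than at $u$ (here $r>0$), hence $\|P-Q'\|^2 > \|P-Q\|^2$, which gives $\|P-Q\| < \|P-Q'\|$ after taking square roots. Note that the direction $O - P$ and $O - P_H$ give the same inequality since $Q' - Q \in H$ and $P - P_H \perp H$, so $(O-P)\cdot(Q'-Q) = (O - P_H)\cdot(Q'-Q)$; I would remark on this equivalence explicitly.

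Finally, for the ``in particular'' statement: the maximum of $\|P - Q\|^2$ over $Q \in S$ is the maximum of the affine function $u \mapsto -2r(P_H - O)\cdot u$ over the unit sphere of $H$, attained uniquely at $u = -(P_H - O)/\|P_H - O\|$, i.e. at the point $Q^\ast = O - r\,(P_H - O)/\|P_H - O\|$. This $Q^\ast$ is precisely the point of $S$ for which $O$ lies between $P_H$ and $Q^\ast$ on a line; and since $P - P_H$ is orthogonal to $H \ni P_H, O, Q^\ast$, the point $O$ also lies on the segment $P Q^\ast$ (projecting the collinear-in-$H$ configuration, or just checking $P_H$, $O$, $Q^\ast$ collinear forces $P$, $O$, $Q^\ast$ to be collinear only after noting $O$ is the foot... ) — more cleanly, $Q^\ast - O$ is a positive multiple of $O - P_H$, and adding the vector $P_H - P \perp H$ does not affect collinearity of $P_H, O, Q^\ast$, but to get $P, O, Q^\ast$ collinear I should instead observe directly that $O - P = (O - P_H) + (P_H - P)$ while $Q^\ast - O$ is parallel to $O - P_H$; these need not be parallel, so the correct statement is that $O$ lies on segment $P_H Q^\ast$, matching the lemma as stated. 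There is no real obstacle here; the only thing requiring a little care is bookkeeping the two equivalent formulations (using $O-P$ versus $O-P_H$) and making sure the strictness is preserved, which it is because $r > 0$ and $P_H \neq O$.
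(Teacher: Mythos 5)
Your proof is correct: the paper omits this proof as a routine exercise, and your argument (Pythagoras to split off $\|P-P_H\|^2$, then observing that the only $Q$-dependent term in $\|P-Q\|^2$ is the linear term $-2r\,(P_H-O)\cdot u$) is precisely the standard computation it alludes to, and it yields both the strict monotonicity and the location of the maximizer on the ray from $P_H$ through $O$. The brief hesitation at the end about $P$ versus $P_H$ resolves itself correctly, since the lemma only claims that $O$ lies on the segment $P_HQ$.
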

	
	\begin{figure}
		\centering
		\includegraphics{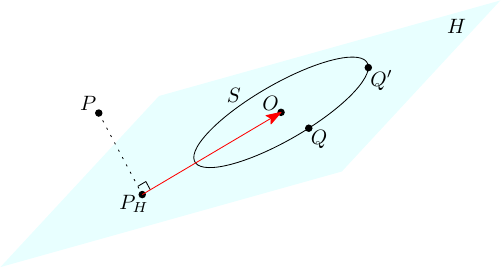}
		\caption{The distance from $P$ to $Q$ is less than the distance from $P$ to $Q'$.}
		\label{fig:distsphere}
	\end{figure}
	
	The proof is omitted as it is a straightforward exercise in Euclidean geometry.
	
	We now prove that the diameter of $\M_0$ is 1. Assume that $P,Q\in\M_0$ define a diameter of $\M_0$.
	Recall that $\M_0$ is the union of the six symmetrical components of the form $\F_{X,Y}$. We proceed by analyzing the possible locations of $P$ and $Q$ across these components.
	
	First, let us summarize the structure of a single component, $\F_{X,Y}$. As defined in Section \ref{sec:def}, $\F_{X,Y}$ is a region on a 2-dimensional sphere. Its boundary is composed of three vertices, $X$, $Y$, and $E$, which are connected by two arcs from the original Reuleaux simplex, $F_{X,E}$ and $F_{Y,E}$, and the arc $S_{X,Y}$. Any point in $\F_{X,Y}$ is either one of the vertices, on one of the boundary arcs, or in the relative interior of $\F_{X,Y}$, which we denote $\F^{\circ}_{X,Y}$.
	
	\noindent\textbf{Case 1: $P$ and $Q$ are in the same component.}
	Assume both $P$ and $Q$ belong to the same component, say $\F_{A,B}$. By construction, this component is a subset of $F_{A,B,E}$.
	This 2-face is a spherical Reuleaux triangle, it is contained in a sphere of radius $\sqrt{3}/2$ and has a diameter of exactly 1. In fact, using Lemma \ref{lem:spheres}, it is easy to verify that the diameters of $F_{A,B,E}$ are realized between any vertex and any point on its opposing boundary arc.
	Since $\F_{A,B} \subset F_{A,B,E}$, it immediately follows that the diameter of $\F_{A,B}$ is 1 and the diameter is achieved for pairs $\{P,Q\}=\{A,R\}$ where $R \in F_{B,E}$, and pairs $\{P,Q\}=\{B,S\}$ where $S \in F_{A,E}$.
	
	\noindent\textbf{Case 2: $P$ and $Q$ are in adjacent components.}
	Assume, without loss of generality, that $P\in\F_{A,B}$ and $Q\in\F_{B,C}$.
	Since these components are subsets of the positive cones $\pos(\{A,B,E\})$ and $\pos(\{B,C,E\})$ respectively, we may write
	\begin{align*}
		P &= \alpha_A A + \alpha_B B + \alpha_E E, \\
		Q &= \beta_B B + \beta_C C + \beta_E E,
	\end{align*}
	with all coefficients being non-negative.
	
	Consider the point $Q' = \beta_C C + \beta_B D + \beta_E E$. This point is simply the reflection of $Q$ with respect to the hyperplane $\spn(\{A,C,E\})$, since this reflection swaps the vertices $B$ and $D$. Note that $Q' \in \F_{C,D}$. A direct computation using the orthogonality of the basis vectors shows that:
	\[
	\norm{P-Q'}^2-\norm{P-Q}^2= \alpha_B \beta_B\ge 0.
	\]
	This implies that $\norm{P-Q} \le \norm{P-Q'}$. This inequality shows that for any point $P$ not on the boundary arc $F_{A,E}$ (i.e., $\alpha_B > 0$) and any point $Q$ not on the boundary arc $F_{C,E}$ (i.e., $\beta_B > 0$), the distance can be increased by reflecting $Q$.
	
	Therefore, any pair $(P,Q)$ that realizes the maximum distance between these two components cannot have both points in the relative interiors of their respective components. The maximum must be achieved when the equality $\norm{P-Q} = \norm{P-Q'}$ holds, which is if and only if $\alpha_B=0$ or $\beta_B=0$.
	\begin{itemize}
		\item If $\alpha_B=0$, then $P$ lies on $F_{A,E}$.
		\item If $\beta_B=0$, then $Q$ lies on $F_{C,E}$.
	\end{itemize}
	In either scenario, the problem reduces to finding the maximum distance between a point on a boundary arc and a point in a component that does not contain that arc. This problem is subsumed by the more general case of finding the distance between two points in opposite components, which we analyze next.
	
	\noindent\textbf{Case 3: $P$ and $Q$ are in opposite components.}
	Assume, without loss of generality, that $P\in\F_{A,B}$ and $Q\in\F_{C,D}$. We may write these points in their respective bases as
	\begin{align*}
		P &= \alpha_A A + \alpha_B B + \alpha_E E, \\
		Q &= \beta_C C + \beta_D D + \beta_E E,
	\end{align*}
	with all coefficients being non-negative.
	
	The component $\F_{C,D}$ lies on the 2-sphere $S$ with center $M_{A,B}$ and radius $\sqrt{3}/2$, which is contained in the 3-space $H=\spn(\{C,D,E\})$. The set $\{M_{A,B}, C, D\}$ forms an orthogonal basis for $H$. To apply Lemma \ref{lem:spheres}, we project $P$ onto $H$. The projection $P_H$ is given by:
	\begin{align*}
		P_H & = \frac{P \cdot M_{A,B}}{\norm{M_{A,B}}^2} M_{A,B} + \frac{P \cdot C}{\norm{C}^2} C + \frac{P \cdot D}{\norm{D}^2} D \\
		& = (\alpha_A+\alpha_B+\varphi\alpha_E) M_{A,B} + \frac{\varphi\alpha_E}{2} (C + D)\\
		& = (\alpha_A+\alpha_B+\varphi\alpha_E) M_{A,B} + \varphi\alpha_E M_{C,D}.
	\end{align*}
	From this expression, we see that $P_H$ lies in the plane $\Pi = \spn(\{M_{A,B},M_{C,D}\})$.
	%The coefficients of $P_H$ in the basis $\{M_{A,B}, M_{C,D}\}$ are non-negative.
	
	\begin{figure}
		\centering
		\includegraphics{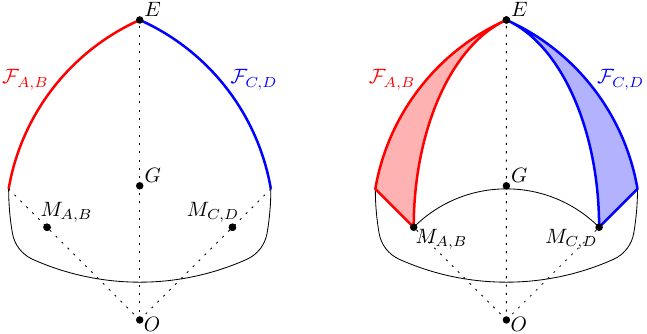}
		\caption{The sets $\Rr$, $\M$ and $\M_0$ cut by (left) and projected onto (right) the plane $\spn(\{M_{A,B},M_{C,D}\})$.}
		\label{fig:cut2d}
	\end{figure}
	
	We now consider two sub-cases for the location of $Q$.
	
	\noindent\textbf{Sub-case 3a: Either $P$ or $Q$ is in the relative interior of its component.}
	Assume $Q\in\F^{\circ}_{C,D}$, which implies $\beta_C, \beta_D, \beta_E > 0$.
	In this case, the distance between $P$ and $Q$ cannot increase when moving $Q$ in $S$.
	Since distance $\norm{P-Q}$ is assumed to be maximal, Lemma \ref{lem:spheres} states that the center of the sphere, $M_{A,B}$, lies on the line segment connecting $P_H$ and $Q$.
	
	We know that $P_H$ and $M_{A,B}$ both lie in the plane $\Pi$, thus $Q$ must also lie in this plane for the three points to be collinear.
    Since $Q=\beta_C C + \beta_D D + \beta_E E$ and $E\in\Pi$, then $Q$ lies in $\Pi$ if and only if $\beta_C = \beta_D$.
    In this case, $Q$ can be expressed in $\{M_{A,B},M_{C,D}\}$, the basis of $\Pi$ as:
	\[
	Q = (\varphi \beta_E) M_{A,B} + (2\beta_C+\varphi \beta_E) M_{C,D}.
	\]
	The condition that $M_{A,B}$ lies on the segment $P_HQ$ means $M_{A,B} = t P_H + (1-t) Q$ for some $t \in [0,1]$. Equating the coefficients in the basis $\{M_{A,B}, M_{C,D}\}$ yields the equations
	\begin{align}
		1 &= t(\alpha_A+\alpha_B+\varphi\alpha_E) + (1-t) \varphi \beta_E\quad \text{and}\label{eq:1}\\
		0 &= t \varphi\alpha_E + (1-t)(2\beta_C+\varphi \beta_E). \label{eq:2}
	\end{align}
	Since all coefficients in \eqref{eq:2} are non-negative, and since $2\beta_C+\varphi \beta_E > 0$, the equation can only hold if $t \varphi\alpha_E=0$ and $1-t=0$. This forces $t=1$ and $\alpha_E=0$.
	
	Substituting $t=1$ and $\alpha_E=0$ into \eqref{eq:1} gives $\alpha_A+\alpha_B = 1$. This implies $P=\alpha_A A+\alpha_B B$, meaning $P$ must lie on the segment connecting $A$ and $B$. For $P$ to also be in $\F_{A,B}$, it must lie on the boundary arc $S_{A,B}$. However, the only points of $S_{A,B}$ on this segment are precisely $A$ and $B$.
	
	In either case, since $Q\in\F_{C,D}\subset F_{C,D}$, it follows that $\norm{P-Q}=1$. The only pairs of points achieving this distance in this case are when $P=A$ or $P=B$ and $Q\in\F^{\circ}_{C,D}$.
	
	Analogously for $P\in\F^{\circ}_{A,B}$, if one point is in the relative interior of a component, the maximum possible distance to a point in an opposite component is 1.
	
	\noindent\textbf{Sub-case 3b: $P$ and $Q$ are on the boundaries of their respective components.}
	The boundary of each component consists of three arcs (e.g., $\partial\F_{A,B} = F_{A,E} \cup F_{B,E} \cup S_{A,B}$). By symmetry, we only need to analyze three representative configurations for the locations of $P$ and $Q$.
	
	\begin{description}[wide=\parindent, leftmargin=2\parindent]
		\item[Configuration 1] Assume $P \in F_{A,E}$ and $Q \in F_{C,E}$. We observe that both of these arcs are part of the boundary of a single component, namely $\F_{A,C}$. This reduces the problem to an application of Case 1, from which we know that the maximum distance is achieved between any pair of distinct vertices in $\{A,C,E\}$.
		
		\item[Configuration 2] Assume $P \in F_{A,E}$ and $Q \in S_{C,D}$.
		An application of Lemma \ref{lem:spheres}, similar to that of Sub-case 3a, shows that the maximum distance is again achieved when one of the points is a vertex.
		If $P=A$, then as a vertex of $\Rr$, its distance to any point $Q \in \F_{C,D}$ is 1. If $Q$ is an endpoint of its arc ($C$ or $D$), its distance to any point on $F_{A,E}$ is 1. In all other cases the distance is less than 1.
		
		\item[Configuration 3] Assume $P \in S_{A,B}$ and $Q \in S_{C,D}$. Since the arcs $S_{A,B}$ and $S_{C,D}$ are contained in orthogonal circles, the distance between any such $P$ and $Q$ is exactly 1.
	\end{description}
	All other combinations of boundary arcs are analogous to these cases by symmetry. In each case we have obtained a maximum distance of 1.
	
	\noindent\textbf{Conclusion.}
	We have shown through a case-by-case analysis that the distance between any two points $P, Q \in \M_0$ is at most 1. Since this maximum distance is achieved, the diameter of $\M_0$ is exactly 1. The preceding analysis also characterizes the pairs of points that realize this diameter.
	
	As established at the beginning of this section, this proves part (3) of Theorem \ref{thm:main}, namely that $\M_0 \subset \partial\M$.
	
	\subsection{A crucial property}
	This section is devoted to proving that for every boundary point $P \in \partial\M \setminus \M_0$, there exists a unique point $Q \in \M_0$ such that $\norm{P-Q}=1$.
	
	The existence of at least one such point $Q$ is a direct consequence of the construction of $\M$ and the compactness of $\M_0$. From the definition of $\M$, any point $P$ on the boundary $\partial\M$ must lie on the boundary of at least one unit ball centered at a point $Q\in\M_0$, which means $\norm{P-Q}=1$.
	
	To prove uniqueness, we show that for any two distinct points $Q, R \in \M_0$, the intersection of their corresponding unit spheres, $\Ss(Q,1) \cap \Ss(R,1)$, can only intersect the boundary $\partial\M$ at points in $\M_0$. This establishes that no point outside $\M_0$ can lie on two such spheres simultaneously. The proof requires a more detailed understanding of the geometry of such intersections, for which we first introduce the concept of a \emph{spindle}.
	
	\subsubsection{Spindles}
	The concept a spindle has appeared before in the literature (e.g., \cite{BLNP2007,BLN2024}). We present a more general definition tailored to our specific geometric setting.
	
	\begin{definition}
		Let $S$ be a relatively open subset of a $k$-sphere in $\R^d$ with radius $s$. For $r\ge s$, the \emph{spindle} generated by $S$ with radius $r$, denoted by $\Sp^k(S,r)$, is the intersection of all closed balls of radius $r$ centered at points in $S$. In other words,
		\[\Sp^k(S,r) = \bigcap_{x \in S} \B(x,r).\]
	\end{definition}
	
	The geometry of a spindle $\Sp^k(S,r)$ depends on the dimension $k$ of its generating set $S$ and the dimension $d$ of its ambient space. The case $k=0$, where $S$ consists of one or two points, simply yields a ball or the intersection of two balls and is not central to our analysis.
	
	The shape is easiest to visualize when $S$ is a complete $k$-sphere. In this situation, the resulting spindle is a body of revolution. Its axis of revolution is the $(d-k-1)$-dimensional subspace $J$ which is orthogonal to the $(k+1)$-space $H$ containing $S$. Consequently, the spindle inherits the symmetries of its generating sphere, meaning it is invariant under any isometry that leaves $S$ invariant.
	
	In $\R^2$, the only non-trivial case is a when $k=1$. If $S$ is a full circle, the resulting spindle is a disk.
	In $\R^3$, we have two possibilities. The case $k=1$, where $S$ is a circle, generates the classic lemon-shaped body  (see e.g., \cite{BLN2024}). The case $k=2$, where $S$ is a 2-sphere, generates a concentric ball.
	In higher dimensions, the geometry is richer, as each choice of $k$ from $1$ to $d-2$ produces distinct types of object.
	
	The following lemma provides a geometric characterization of the boundary of a spindle, two examples of which are illustrated in Figure \ref{fig:spindle}.
	
	\begin{lemma}\label{lem:spindle}
		Let $0<k<d$ and let $S$ be a relatively open subset of a $k$-sphere $\mathcal{S}_k$ that lies in a $(k+1)$-dimensional affine space $H$ of $\R^d$. Let $J$ be the orthogonal complement to $H$ passing through the center of $\mathcal{S}_k$. For any point $x \in S$, let $J_x$ be the affine space spanned by $J$ and $x$, and let $J_x^+$ be the closed half-space of $J_x$ bounded by $J$ that does not contain $x$. Then the points in $\Sp^k(S,1)$ at distance $1$ from $x$ are
		\[
		\Sp^k(S,1) \cap \Ss(x,1) = J_x^+ \cap \Ss(x,1).
		\]
	\end{lemma}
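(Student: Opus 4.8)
The plan is to reduce everything to a single Pythagorean identity and then extract all the content from the relative openness of $S$.

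First I would fix coordinates adapted to the problem: let $c$ and $\rho$ be the center and radius of $\mathcal{S}_k$, and for a point $P\in\R^d$ write $P_H$ for its orthogonal projection onto $H$, so that $P-P_H$ is orthogonal to $H-c$ and hence $P-P_H\in J-c$. For any $y\in\mathcal{S}_k\subset H$ the vectors $P-P_H$ and $y-P_H$ are orthogonal, so Pythagoras together with the expansion $\norm{P_H-y}^2=\norm{P_H-c}^2+\rho^2-2(P_H-c)\cdot(y-c)$ gives
\[
\norm{P-y}^2=\norm{P-P_H}^2+\norm{P_H-c}^2+\rho^2-2(P_H-c)\cdot(y-c),
\]
whose only dependence on $y$ is through the last term. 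Subtracting the same identity with $x$ in place of $y$ then yields the comparison $\norm{P-y}^2-\norm{P-x}^2=2(P_H-c)\cdot(x-y)$. Consequently, for a point $P$ with $\norm{P-x}=1$, the condition $P\in\Sp^k(S,1)$---that is, $\norm{P-y}\le\norm{P-x}$ for all $y\in S$---is equivalent to asking that $x$ minimize the linear functional $\ell(y):=(P_H-c)\cdot(y-c)$ over $S$.

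Next I would analyze this minimization, and this is where relative openness enters. If $P_H=c$ then $\ell\equiv 0$, the condition is automatic, and $P\in J$. If $P_H\neq c$, then the restriction of $\ell$ to $\mathcal{S}_k$ is critical at a point $y$ only where the tangential part $(P_H-c)-\rho^{-2}\ell(y)(y-c)$ of $P_H-c$ vanishes, i.e.\ only where $y-c$ is parallel to $P_H-c$; its unique local minimum on $\mathcal{S}_k$ is the ``south pole'' $y=c-\rho\,\norm{P_H-c}^{-1}(P_H-c)$. Since $S$ is a relative neighborhood of $x$ in $\mathcal{S}_k$ and $x$ minimizes $\ell$ over $S$, the point $x$ is a local minimum of $\ell$ on $\mathcal{S}_k$, which forces $x-c=-\rho\,\norm{P_H-c}^{-1}(P_H-c)$; equivalently $P_H-c=-\mu(x-c)$ with $\mu=\rho^{-1}\norm{P_H-c}>0$. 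Conversely, if $P_H-c=-\mu(x-c)$ for some $\mu\ge0$---which also recaptures the case $P_H=c$---then by Cauchy--Schwarz $\ell(y)=-\mu(x-c)\cdot(y-c)\ge-\mu\rho^2=\ell(x)$ for every $y\in\mathcal{S}_k$, so $x$ minimizes $\ell$ over all of $\mathcal{S}_k$, a fortiori over $S$.

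Finally I would translate the condition ``$P_H-c=-\mu(x-c)$ for some $\mu\ge0$'' into membership in $J_x^+$. Since $x-c$ is a nonzero vector of $H-c$ orthogonal to $J-c$, we have $x\notin J$, and $J_x=\aff(J\cup\{x\})=\{c+t(x-c)+w:\ t\in\R,\ w\in J-c\}$, in which $J$ is the hyperplane $\{t=0\}$ and $x$ lies on the side $\{t=1\}$, so $J_x^+=\{t\le0\}$. Writing $P=c-\mu(x-c)+(P-P_H)$ with $P-P_H\in J-c$ shows that $\mu\ge0$ is exactly the statement $P\in J_x^+$. Combining this with $\norm{P-x}=1$ gives the desired equality $\Sp^k(S,1)\cap\Ss(x,1)=J_x^+\cap\Ss(x,1)$. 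The one genuinely delicate point is the extremal argument of the middle paragraph: it is precisely the relative openness of $S$ that forbids $x$ from being a ``non-polar'' boundary point of $S$ and thereby pins down the direction of $P_H-c$; dropping this hypothesis breaks the lemma, as the case of a one-point set $S$ already illustrates.
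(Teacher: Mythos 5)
Your proof is correct, and it follows essentially the route the paper intends: the paper omits the argument, saying only that it is obtained by applying Lemma \ref{lem:spheres} to the sphere $\mathcal{S}_k$ and a point $P\in\Sp^k(S,1)\cap\Ss(x,1)$, and your comparison identity $\norm{P-y}^2-\norm{P-x}^2=2(P_H-c)\cdot(x-y)$ is precisely the content of that lemma. Your write-up simply makes explicit the two steps the paper leaves to the reader, namely using the relative openness of $S$ to force $x$ to be the pole of $\mathcal{S}_k$ antipodal to the direction of $P_H-c$, and translating that condition into membership in $J_x^+$, so it stands as a complete and correct proof.
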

	
	\begin{figure}
		\centering
		\includegraphics{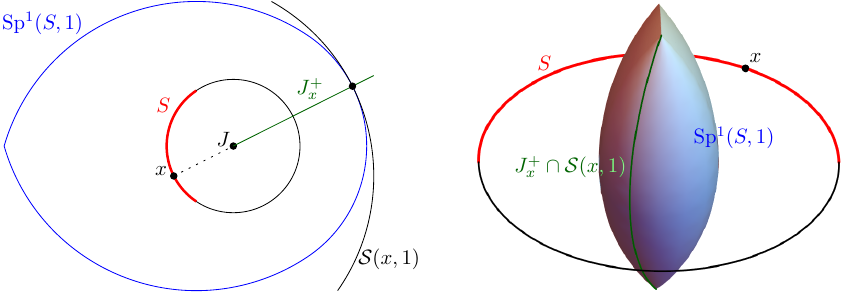}
		\caption{On the left is a spindle $\Sp^1(S,1)$ in $\R^2$, where $S$ is a subset of circle of radius $1/3$. On the right is another example of a spindle $\Sp^1(S,1)$ in $\R^3$ where $S$ is contained in a circle of radius $3/4$.}
		\label{fig:spindle}
	\end{figure}
	
	Note that $J_x^+ \cap \Ss(x,1)$ is also contained in the positive cone centered at $x$ generated by the vectors from $x$ to $J_x\cap \Ss(x,1)$.
	The proof of this lemma, which we omit for brevity, can be obtained by applying Lemma \ref{lem:spheres} on the sphere $\Ss_k$ and a point $P\in\Sp^k(S,1)\cap\Ss(x,1)$.

	The preceding lemma is useful because the body $\M$ is contained within spindles generated by subsets of $\M_0$.
	We may partition $\M_0$ into its constituent geometric parts:
	\begin{itemize}
		\item The vertices: $A$, $B$, $C$, $D$ and $E$.
		\item The open boundary arcs: sets of the form $F^\circ_{X,E}$ and $S^\circ_{X,Y}$.
		\item The relative interiors of the 2-dimensional components: sets of the form $\F^\circ_{X,Y}$.
	\end{itemize}
	Here, the superscript $\circ$ denotes the relative interior of a set.
	Consider $S \subset \M_0$ to be any one of these relatively open arcs or pieces of sphere. By the definition of $\M$, it follows that $\M \subset \Sp^k(S,1)$ (for $k=1$ or $2$). Consequently, for any point $x \in S$, a point $y \in \partial\M$ at distance 1 from $x$ must also lie within this spindle. Therefore, $y$ is confined to the intersection $\Sp^k(S,1) \cap \Ss(x,1)$. Lemma \ref{lem:spindle} provides a precise characterization of this region.
	
	The partition of $\M_0$ and the definition of $\M$ imply that the boundary set $\partial\M\setminus\M_0$ is composed of surface patches from spindles generated by the arcs and faces of $\M_0$, and the unit 3-spheres centered at the vertices $A$, $B$, $C$, $D$ and $E$.
	In the subsequent parts of this section it is shown that every point $P \in \partial\M\setminus\M_0$ belongs to exactly one of these surface patches. Furthermore, this uniqueness implies that the boundary of $\M$ outside of $\M_0$ is a smooth surface.
	
	\subsubsection{Proof of the Uniqueness Property}
	
	Assume there exists a point $P \in \partial\M$ that lies on the intersection of $\Ss(Q,1)$ and $\Ss(R,1)$, for two distinct points $Q,R \in \M_0$. Our goal is to show that $P\in\M_0$.
	
	The proof proceeds in three main steps. First, we analyze two configurations where $P$ is equidistant from a pair of vertices of the simplex $\Rr$. Second, we show that the more general case, where $Q$ and $R$ lie in the same component of $\M_0$, reduces to these initial vertex configurations. Finally, all remaining cases, where $Q$ and $R$ are in different components, are handled by using linear programming. Since the details are extensive, we focus on the key arguments and summarize the results of the computations.
	
	\noindent\textbf{Case 1: $A$ and $E$ are at distance 1 from $P$}
	Assume $\norm{P-A} = \norm{P-E} = 1$, then $P$ lies on the perpendicular bisecting hyperplane $\Pi_{A,E}$ of the segment $AE$.
	Since $\M \subset \Rr$, $P$ must also belong to the set of points in the Reuleaux simplex equidistant from $A$ and $E$, which is $F_{B,C,D}$. This face lies on a 2-sphere, which we denote $\Ss_{B,C,D}$, with center $M_{A,E}$ and radius $\sqrt{3}/2$. Note that the vertices $B$, $C$ and $D$ all lie on this sphere.

    Now, let $X$ be the midpoint of the arc $F_{A,E}$. Since $X \in \M_0$, $P$ is contained in the closed ball $\B(X, 1)$. Furthermore, the vertices $B$, $C$, and $D$ lie on the boundary of this ball. We claim that, apart from these vertices, $F_{B,C,D}$ lies strictly outside of $\B(X,1)$.
    
    To see this, consider the hyperplane $\Pi_{A,E}$ that contains the face $F_{B,C,D}$ and the points $M_{A,E}$ and $X$. Within this hyperplane, $\Ss_{B,C,D}$ and $\Ss(X, 1)$ intersect at the circle $W$. This circle passes through $B$, $C$, and $D$ and is centered at $M_{B,C,D}$.
    Note that $F_{B,C,D}$ intersects $W$ only at its vertices. Therefore, since the radius of $\Ss_{B,C,D}$ is smaller than the radius of $\Ss(X, 1)$, the only points from $F_{B,C,D}$ in $\B(X,1)$ are its vertices. We conclude that $\B(X,1)\cap F_{B,C,D}=\{B,C,D\}$. Thus, $P$ must be one of these vertices and is therefore in $\M_0$.
    
	\noindent\textbf{Case 2: $A$ and $B$ are at distance 1 from $P$}
	The argument is similar to the previous case. Assume $\norm{P-A} = \norm{P-B} = 1$. Then $P$ is on the perpendicular bisecting hyperplane $\Pi_{A,B}$ of the segment $AB$.
	Since $\M \subset \Rr$, $P$ must also lie on the face $F_{C,D,E}$. This face lies on the 2-sphere $\Ss_{C,D,E}$, centered at $M_{A,B}$ with radius $\sqrt{3}/2$.
	The vertices $C$, $D$ and $E$ all lie on this sphere.
	
	Let $X$ be the midpoint of the arc $S_{A,B}$. Since $X \in \M_0$, $P$ is contained in the closed ball $\B(X, 1)$. The boundary of this ball contains the vertices $C$ and $D$ and contains the entire arc $S_{C,D}$.
	
	The intersection of the face $F_{C,D,E}$ with the ball $\B(X, 1)$ is precisely the component $\F_{C,D}$. Thus, our point $P$ must belong to the set $\F_{C,D}$ which is contained in $\M_0$.
	
	\noindent\textbf{Case 3: $Q$ and $R$ are in the same component.}
	Assume that $Q$ and $R$ belong to the component $\F_{A,B}$. Let $\Ss_F$ be the 2-sphere that contains $\F_{A,B}$. Let $T = \Ss_F \cap \Ss(P,1)$, which is the set of all points on $\Ss_F$ at unit distance from $P$. Since $\Ss_F$ is a 2-sphere, there are three possibilities for $T$:
	
	\begin{description}[wide=\parindent, leftmargin=2\parindent]
		\item[$T$ is a single point] This is impossible, as $T$ must contain the two distinct points $Q$ and $R$.
		
		\item[$T$ is a circle] This circle divides the sphere $\Ss_F$ into two open spherical caps. By the definition of $\M$, every point $X \in \M_0$ must satisfy $\norm{P-X} \le 1$, therefore $\F_{A,B}$ must lie in the closed spherical cap where the distance to $P$ is less than or equal to 1.
		For $\F_{A,B}$ to be contained in this closed cap while touching its boundary circle $T$ at two distinct points, it is necessary that at least two of the vertices of $\F_{A,B}$ (i.e. $A$, $B$ and $E$) also lie on the circle $T$.
		
		\item[$T$ is the entire sphere $\Ss_F$] In this scenario, all points of $\Ss_F$ are at distance 1 from $P$. In particular, the three vertices $A$, $B$ and $E$ are at distance 1 from $P$.
	\end{description}
	
	In all cases, $P$ must be equidistant from at least two vertices of $\F_{A,B}$. This reduces the problem to the configurations already analyzed in Case 1 and Case 2.
	
	\noindent\textbf{Case 4: $Q$ and $R$ are not in a single component.}
	The strategy for the remaining cases is to define a region $\mathcal{C}$ that must contain $P$, based on the locations of $Q$ and $R$, and then to verify that the intersection $\mathcal{C} \cap \partial\M$ is a subset of $\M_0$.
	For each $Q$ and $R$, the regions we construct are polyhedral. This allows us to use linear programming to easily compute their intersection.
	
	\begin{figure}
		\centering
		\includegraphics{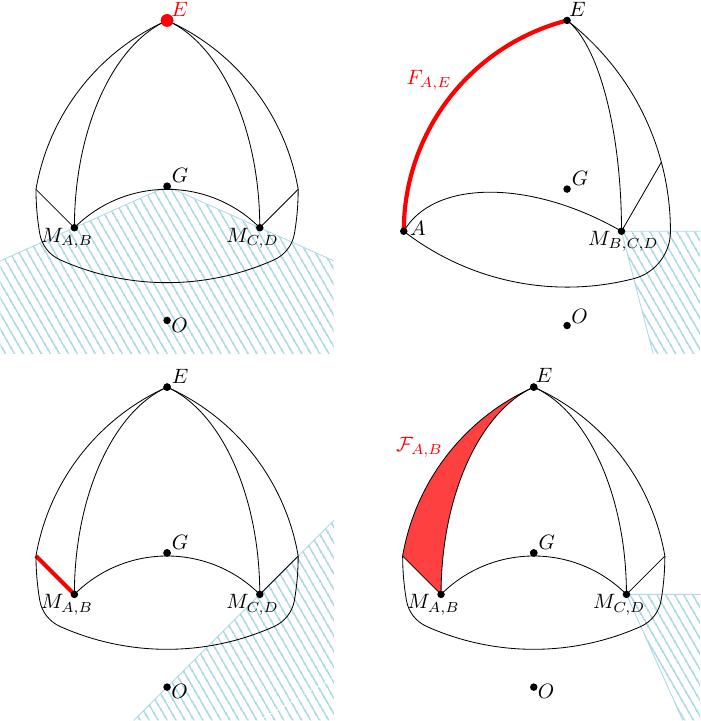}
		\caption{Projections of the set $\M$ onto a 2-dimensional plane. The highlighted red sets ($E$, $F_{A,E}$, $S_{A,B}$, and $\F_{A,B}$) are the projections of a vertex and sets that generate spindles. The blue hatched region is the resulting polyhedral region that must contain $P$.}
		\label{fig:projregions}
	\end{figure}
	
	Figure \ref{fig:projregions} provides a 2-dimensional illustration of these constraint regions. Below, we define these polyhedral regions for each type of generating set, up to symmetry.
	
	\begin{description}[wide=\parindent, leftmargin=2\parindent]
		\item[If $Q=A$]
		The sphere $\Ss(Q,1)$ intersects the Reuleaux simplex $\Rr$ only at points on the opposite face, $F_{B,C,D,E}$. This confines $P$ to the set:
		\begin{itemize}
			\item The positive cone with apex $G$ spanned by the vectors $B-G, C-G, D-G,$ and $E-G$.
		\end{itemize}
		
		\item[If $Q\in F_{A,E}^\circ$] By Lemma \ref{lem:spindle}, the sphere $\Ss(Q,1)$ can intersect $\M$ only within the region defined by the intersection of the following half-spaces:
		\begin{itemize}
			\item The closed half-space bounded by the hyperplane through $A,B,C,D$ that does not contain $E$.
			\item The closed half-space bounded by the hyperplane through $B,C,D,E$ that does not contain $A$.
		\end{itemize}
		
		\item[If $Q\in S_{A,B}^\circ$] Similarly, by Lemma \ref{lem:spindle}, the intersection of $\Ss(Q,1)$ with $\M$ is confined to the region defined by:
		\begin{itemize}
			\item The closed half-space bounded by the hyperplane through $A,C,D,O$ that does not contain $B$.
			\item The closed half-space bounded by the hyperplane through $B,C,D,O$ that does not contain $A$.
		\end{itemize}
		
		\item[If $Q\in \F_{A,B}^\circ$] Finally, by Lemma \ref{lem:spindle} on the 2-dimensional component $\F_{A,B}$, $\Ss(Q,1)$ can only intersect $\M$ in the intersection of:
		\begin{itemize}
			\item The closed half-space bounded by the hyperplane through $A,B,C,D$ that does not contain $E$.
			\item The closed half-space bounded by the hyperplane through $A,C,D,E$ that does not contain $B$.
			\item The closed half-space bounded by the hyperplane through $B,C,D,E$ that does not contain $A$.
		\end{itemize}
	\end{description}
	
	By exploiting the symmetries of $\M_0$, the remaining analysis can be reduced to the representative configurations enumerated below.
	\begin{enumerate}
		\item Let $Q=A$ (a base vertex). The possibilities for $R$ are:
		\begin{multicols}{2}
			\begin{enumerate}
				\item $R\in S^\circ_{B,C}$
				\item $R\in \F^\circ_{B,C}$
			\end{enumerate}
		\end{multicols}
		\item Let $Q \in F^\circ_{A,E}$ (an apex-connected arc). The possibilities for $R$ are:
		\begin{multicols}{2}
			\begin{enumerate}
				\item $R\in S^\circ_{B,C}$
				\item $R\in \F^\circ_{B,C}$
			\end{enumerate}
		\end{multicols}
		\item Let $Q \in S^\circ_{A,B}$ (a base arc). The possibilities for $R$ are:
		\begin{multicols}{2}
			\begin{enumerate}
				\item $R\in S^\circ_{B,C}$
				\item $R\in S^\circ_{C,D}$
				\item $R\in \F^\circ_{B,C}$
				\item $R\in \F^\circ_{C,D}$
			\end{enumerate}
		\end{multicols}
		\item Let $Q \in \F^\circ_{A,B}$ (a component interior). The possibilities for $R$ are:
		\begin{multicols}{2}
			\begin{enumerate}
				\item $R\in \F^\circ_{B,C}$
				\item $R\in \F^\circ_{C,D}$
			\end{enumerate}
		\end{multicols}
	\end{enumerate}

    For all but one of the listed configurations, we apply a directional minimization strategy to restrict the location of the polyhedral region $\mathcal C$ with respect to $\M$. Assuming $\mathcal C$ is non-empty, evaluating the minimum along an appropriately chosen direction leads to one of its vertices $X$.
    This direction is chosen so that the hyperplane orthogonal to it through $X$ separates $\M$ from the interior of $\mathcal C$.
    If $X$ is outside of $\M$, then $\mathcal C\cap\partial\M$ is empty. If $X$ is a vertex of $\M$, then $\mathcal C\cap\partial\M$ contains a single point which belongs to $\M_0$.
    
	For configurations 1(a), 1(b), 2(a), 2(b), 3(c) and 4(a), $D$ is a minimizer in the direction $D-E$. This implies that $P=D$, which is in $\M_0$.
	
	For configurations 3(b) and 3(d), the minimizers in the direction $-E$ are $O$ and $$-\frac{\varphi^2}{2 \sqrt{2}}(C+D),$$ respectively. Since both are at distance larger than 1 from $E$, we conclude that neither region intersects $\M$.
	
	The region corresponding to configuration 4(b) is empty.
	
	This leaves only configuration 3(a). In this case, our analysis indicates that P can be expressed as
	\[P=-\alpha_AA-\alpha_BB-\alpha_CC+\alpha_DD\]
	for some non-negative coefficients $\alpha_A,\alpha_B,\alpha_C,\alpha_D\ge 0$.    
	Let $Q=\cos(s)A+\sin(s)B$ and $R=\cos(t)C+\sin(t)B$ with $s,t\in(0,\pi/2)$.
	The conditions $\norm{P-Q}=1$ and $\norm{P-R}=1$ yield
	\begin{align*}
		1 &= \frac12(1+\alpha_A^2+\alpha_B^2+\alpha_C^2+\alpha_D^2)+\alpha_A\cos(s)+\alpha_B\sin(s), \\
		1 &= \frac12(1+\alpha_A^2+\alpha_B^2+\alpha_C^2+\alpha_D^2)+\alpha_C\cos(t)+\alpha_B\sin(t).
	\end{align*}
	Since $P$ is in $M$, its distance to any point in $M_0$ cannot exceed 1. Therefore, the above expressions must be local maxima with respect to $s$ and $t$ respectively.
	Setting their derivatives to zero gives
	\begin{align*}
		\alpha_B\cos(s)-\alpha_A\sin(s)=0,\\
		\alpha_B\cos(t)-\alpha_C\sin(t)=0.
	\end{align*}
	Analyzing this system for fixed $s$ and $t$ reveals that the only solutions satisfy $\alpha_A=\alpha_B=\alpha_C=0$ and $\alpha_D=\pm 1$, but since the coefficients are non-negative we have $\alpha_D=1$. This implies $P=D$, which is in $\M_0$.
	
	This analysis shows that no point $P \in \partial\M \setminus \M_0$ can be at unit distance from two distinct points $Q, R \in \M_0$. This proves the uniqueness property stated at the beginning of this subsection. With this proven, we are ready to complete the proof of Theorem \ref{thm:main}.
	
	\subsection{Smoothness of the boundary}
	A point $P \in \partial\M$ is smooth if it lies on the boundary of exactly one of the defining balls $\B(Q,1)$ for some $Q \in \M_0$. A non-smooth point must lie on the boundary of at least two such balls, say $\B(Q_1,1)$ and $\B(Q_2,1)$ for distinct $Q_1, Q_2 \in \M_0$ (see, e.g., \cite{KMP2010}). This would imply that $P \in \Ss(Q_1,1) \cap \Ss(Q_2,1)$. This contradicts the analysis done in the previous subsection. This proves part (4) of the theorem.
	
	\subsection{Endpoints of diameters}
	To prove part (5), let $PP'$ be any diameter of $\M$. If $P \in \M_0$, the claim is satisfied. If $P \in \partial\M \setminus \M_0$, we have established that there is a unique point $Q \in \M_0$ such that $\norm{P-Q}=1$. Furthermore, since $\M$ is smooth at $P$, the vector $P-P'$ must be parallel to $Q-P$.
	Since $\M$ is strictly convex and $\M_0\subset\partial\M$, this unique diametrically opposite point must be $P'$. Therefore the diameter of $\M$ is 1 and $P' \in \M_0$.
	
	\subsection{\texorpdfstring{$\M$}{M} has constant width}
	The preceding results establish that $\M$ is a compact convex body with a diameter of exactly 1. Furthermore, for every point $P \in \partial\M$, there exists another point $Q \in \partial\M$ such that $\norm{P-Q}=1$. From here we follow the approach taken in \cite{MR2016}. By a well-known theorem of Pál (see \cite{MMO2019}), any compact convex set with these two properties is a body of constant width. This completes the proof of Theorem \ref{thm:main}.
	
	\section{The Shadow of \texorpdfstring{$\M$}{M}} \label{sec:shadow}
	
	Let $H_E$ be the 3-dimensional subspace orthogonal to the vector $E$, and let $\pi: \R^4 \to H_E$ be the orthogonal projection. In this section, we analyze the geometry of the shadow body $\pi(\M)$. As the orthogonal projection of a body of constant width, $\pi(\M)$ is also a body of constant width in three dimensions (see Figure \ref{fig:shadow}).

    The body $\pi(\M)$ has been previously constructed. It is defined in \cite{ABPR2025} as a $3$-dimensional projection of the $4$-dimensional body introduced in \cite{ABNPR2025}. In what follows, we provide an independent description of $\pi(\M)$ in $\R^3$. We then prove that these two bodies coincide.
    
	\begin{figure}
		\centering
		\includegraphics{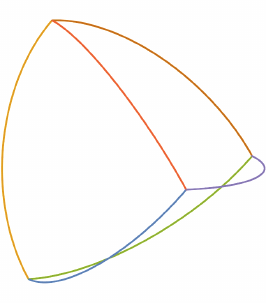}
		\caption{The elliptical arcs of the shadow of $\M$.}
		\label{fig:ellipses}
	\end{figure}
	
	Before we continue, we describe the projections of the six base arcs $S_{X,Y}$ (see Figure \ref{fig:ellipses}). Each arc projects onto an elliptical arc $\E_{X,Y}$ in the subspace $H_E$. This projected arc is the shorter arc of the outer Löwner–John ellipsoid of the rectangle with vertices $\pm\pi(X), \pm\pi(Y)$. This is, in other words, the unique ellipse of minimal area centered at the origin that passes through the points $\pi(X)$ and $\pi(Y)$. A direct calculation shows that each of these six ellipses has an eccentricity of $e = 1/\sqrt{2}$.
	
	For completeness, we provide explicit parametrizations for the six elliptical arcs $\E_{X,Y}$. To express these symmetrically, we choose an orthonormal basis for the subspace $H_E$ in which the projected vertices $\pi(A), \pi(B), \pi(C), \pi(D)$ are given by the coordinates:
	\[
	\frac{1}{2\sqrt{2}}(1,1,-1), \quad \frac{1}{2\sqrt{2}}(1,-1,1), \quad \frac{1}{2\sqrt{2}}(-1,1,1), \quad \text{and} \quad \frac{1}{2\sqrt{2}}(-1,-1,-1).
	\]
	In this basis, the parametrizations $\gamma_{X,Y}(t)$ for the arcs are given by:
	\begin{align*} \gamma_{A,B}(t)&=\frac{1}{2\sqrt{2}}\bigl(\cos(t)+\sin(t), \cos(t)-\sin(t), -\cos(t)+\sin(t)\bigr),\\ \gamma_{A,C}(t)&=\frac{1}{2\sqrt{2}}\bigl(\cos(t)-\sin(t), \cos(t)+\sin(t), -\cos(t)+\sin(t)\bigr),\\ \gamma_{A,D}(t)&=\frac{1}{2\sqrt{2}}\bigl(\cos(t)-\sin(t), \cos(t)-\sin(t), -\cos(t)-\sin(t)\bigr),\\ \gamma_{B,C}(t)&=\frac{1}{2\sqrt{2}}\bigl(\cos(t)-\sin(t), -\cos(t)+\sin(t), \cos(t)+\sin(t)\bigr),\\ \gamma_{B,D}(t)&=\frac{1}{2\sqrt{2}}\bigl(\cos(t)-\sin(t), -\cos(t)-\sin(t), \cos(t)-\sin(t)\bigr),\\ \gamma_{C,D}(t)&=\frac{1}{2\sqrt{2}}\bigl(-\cos(t)-\sin(t), \cos(t)-\sin(t), \cos(t)-\sin(t)\bigr), \end{align*}
	for $t \in [0, \pi/2]$.
	
	Let $\E$ be the union of these six projected elliptical arcs, that is,
	\[
	\E=\bigcup_{\substack{X,Y \in \{A,B,C,D\}}} \E_{X,Y}.
	\]
	We are now ready to give a characterization of the shadow body $\pi(\M)$.
	\begin{prop}
		The body $\pi(\M)$ is the intersection of all 3-dimensional unit balls centered at $\E$. That is,
		\[
		\pi(\M) = \bigcap_{p \in \E} \B^3(p,1),
		\]
		where $\B^3(p,1)$ denotes a 3-dimensional ball of radius 1 centered at $p$ in the subspace $H_E$.
	\end{prop}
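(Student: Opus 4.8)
The plan is to establish the two inclusions $\pi(\M)\subseteq N$ and $N\subseteq\pi(\M)$, where $N:=\bigcap_{p\in\E}\B^3(p,1)$. For $\zeta\in H_E$ write $\pi^{-1}(\zeta)$ for the fibre of $\pi$, that is, the line $\zeta+\spn\{E\}$ through $\zeta$ in the direction of $E$, and write $h(x)=\langle x,E\rangle/\norm{E}$ for the signed component of $x\in\R^4$ along $E$, so that $\norm{z-x}^2=\norm{\pi(z)-\pi(x)}^2+\bigl(h(z)-h(x)\bigr)^2$. The inclusion $\pi(\M)\subseteq N$ is immediate: since $\bigcup_{X,Y}S_{X,Y}\subseteq\M_0$ and orthogonal projection is $1$‑Lipschitz, $\M\subseteq\B(q,1)$ gives $\pi(\M)\subseteq\B^3(\pi(q),1)$ for every $q\in\bigcup_{X,Y}S_{X,Y}$; intersecting over all such $q$ and using $\pi\bigl(\bigcup_{X,Y}S_{X,Y}\bigr)=\E$ yields $\pi(\M)\subseteq N$.

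For the reverse inclusion the first step is to observe that $\pi(\M_0)\subseteq\operatorname{conv}(\E)$. A direct computation in the coordinates of Section~\ref{sec:def} gives $\pi(P)=\alpha\,\pi(X)+\beta\,\pi(Y)$ for a point $P=\alpha X+\beta Y+\gamma E\in\F_{X,Y}$, that $\pi(S_{X,Y})=\E_{X,Y}$, and that $\pi(F_{X,E})$ is the segment $[\pi(E),\pi(X)]$; hence $\pi(\F_{X,Y})$ is the planar region bounded by $[\pi(E),\pi(X)]$, $[\pi(E),\pi(Y)]$ and $\E_{X,Y}$, which lies in $\operatorname{conv}(\{\pi(E)\}\cup\E_{X,Y})\subseteq\operatorname{conv}(\E)$ because $\pi(E)=0\in\operatorname{conv}(\E)$ (indeed $\pi(A)+\pi(B)+\pi(C)+\pi(D)=0$). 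Since Euclidean balls are convex, it follows that every $\zeta\in N$ satisfies $\norm{\zeta-\pi(x)}\le1$ for \emph{all} $x\in\M_0$, not merely for the generating points of $\E$; in particular each set $\pi^{-1}(\zeta)\cap\B(x,1)$ is a nonempty compact interval of the line $\pi^{-1}(\zeta)$. By Helly's theorem on the line, $\pi^{-1}(\zeta)\cap\M=\bigcap_{x\in\M_0}\bigl(\pi^{-1}(\zeta)\cap\B(x,1)\bigr)$ is nonempty — and therefore $\zeta\in\pi(\M)$ — as soon as these intervals meet pairwise. Parametrising $\pi^{-1}(\zeta)$ as $s\mapsto\zeta+s\,E/\norm{E}$, the pairwise condition for $x_1,x_2\in\M_0$ is exactly
\[
\bigl|h(x_1)-h(x_2)\bigr|\ \le\ \sqrt{1-\norm{\zeta-\pi(x_1)}^2}\;+\;\sqrt{1-\norm{\zeta-\pi(x_2)}^2}.
\]
Thus $N\subseteq\pi(\M)$ reduces to proving this single inequality for every $\zeta\in N$ and every pair $x_1,x_2\in\M_0$.

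Proving this inequality is the main obstacle, and I would handle it in the spirit of the uniqueness property of Section~\ref{sec:constw}. Using the tetrahedral symmetry of $\M_0$ one reduces to finitely many configurations according to which pieces contain $x_1$ and $x_2$ — a vertex, an arc $F_{\cdot,E}$, an arc $S_{\cdot,\cdot}$, or a relative interior $\F^{\circ}_{\cdot,\cdot}$, lying in the same, an adjacent, or the opposite component — and in each configuration one parametrises $x_i=\alpha_iX_i+\beta_iY_i+\gamma_iE$, so that $h(x_i)=\tfrac{1}{2\sqrt2}(\alpha_i+\beta_i+2\gamma_i\varphi)$ and $\pi(x_i)=\alpha_i\pi(X_i)+\beta_i\pi(Y_i)$, and then checks the inequality using that $\zeta\in N$ confines $\zeta$ to within distance $1$ of the relevant projected arcs (e.g., of $\pi(E)=0$ and of $\pi(X)$ when $x_1$ is near $E$). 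The left-hand side is largest when $x_1$ is near the apex $E$ and $x_2$ is near the base (a base vertex, or a point of some $S_{Z,W}$), where $\bigl|h(x_1)-h(x_2)\bigr|$ attains its maximum $\tfrac{\sqrt{10}}{4}<1$, so the inequality never degenerates and there is always some slack; as with the case analysis of Section~\ref{sec:constw}, each configuration reduces to an elementary (linear/quadratic) feasibility check, and the real work is the bookkeeping rather than any single case.

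Once the inequality is established, $N\subseteq\pi(\M)$ follows, and together with the first inclusion we obtain $\pi(\M)=N$, which is the assertion. (One might instead try to show that $N$ itself is a body of constant width $1$ and invoke that a body of constant width cannot properly contain another of the same width; but the crux of that route — that $\operatorname{diam}(N)=1$ — rests on the same geometry of $\M_0$ and does not appear to be any easier than the inequality above.)
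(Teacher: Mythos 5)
Your forward inclusion and your Helly-on-fibres reduction are both correct: the observation that $\pi(\M_0)\subseteq\operatorname{conv}(\E)$ makes every fibre interval $\pi^{-1}(\zeta)\cap\B(x,1)$ nonempty for $\zeta\in N$, and one-dimensional Helly (plus compactness for the infinite family) legitimately reduces $N\subseteq\pi(\M)$ to the pairwise inequality $\abs{h(x_1)-h(x_2)}\le\sqrt{1-\norm{\zeta-\pi(x_1)}^2}+\sqrt{1-\norm{\zeta-\pi(x_2)}^2}$ for all $x_1,x_2\in\M_0$ and $\zeta\in N$. But that inequality is the entire content of the reverse inclusion, and you do not prove it: you only announce a case analysis ``in the spirit of'' Section \ref{sec:constw} and assert that each configuration is an elementary feasibility check. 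This is a genuine gap, not bookkeeping. Unlike the polyhedral confinement regions used in Section \ref{sec:constw}, here $\zeta$ ranges over $N$ itself, a non-polyhedral intersection of balls centered on elliptical arcs, and the constraint couples two square-root terms; the only quantitative input you offer, $\max\abs{h(x_1)-h(x_2)}=\tfrac{\sqrt{10}}{4}<1$, gives no control on the right-hand side, which can a priori be small when $\zeta$ is simultaneously near unit distance from both $\pi(x_1)$ and $\pi(x_2)$. So as written the proposal is a correct reduction followed by an unexecuted plan.

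For contrast, the paper avoids this pairwise analysis entirely. It first notes that opposite base arcs $S_{X,Y}$ and $S_{Z,W}$ contain unit-distance pairs whose difference vector is parallel to $H_E$, so the arcs $\E_{X,Y}$ lie on $\partial\pi(\M)$, and then proves the reverse inclusion by contradiction: taking $P\in N\setminus\pi(\M)$, the nearest point $Q\in\pi(\M)$ and the supporting half-space at $Q$ with outer normal $P-Q$, it splits into the case $Q\notin\pi(\M_0)$ (where the already-proved uniqueness/smoothness property of Section \ref{sec:constw} produces a point $\pi(R)\in\pi(\M_0)$ collinear with $P,Q$ at distance greater than $1$ from $P$) and the case $Q\in\E$ (where the unit-distance partner on the opposite arc cannot lie in the half-space intersected with $\B^3(P,1)$). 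If you want to salvage your route, you must actually establish the square-root inequality over all configurations and over all $\zeta\in N$; alternatively, re-use the structural results of Section \ref{sec:constw} as the paper does, which is what makes the proof short.
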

	
	\begin{proof}
		A key property of the elliptical arcs $\E_{X,Y}$ is that for any point on one arc, there is a corresponding point on the opposite arc at unit distance. To be precise, consider a point $P = \alpha A + \beta B$ on the arc $S_{A,B}$ and its symmetrically corresponding point $Q = \alpha C + \beta D$ on the opposite arc $S_{C,D}$. The vector $P-Q$ has unit length and is parallel to the projection subspace $H_E$. This implies that since $\pi(\M)$ has constant width 1, the projected arcs $\E_{X,Y}$ must lie on its boundary.
		
		We also note that the projection of each 2-dimensional component $\F_{X,Y}$ is the set of points $\{\lambda p : p \in \E_{X,Y}, \lambda \in [0,1]\}$. This forms a planar sector of an ellipse: a region bounded by two line segments and the elliptical arc $\E_{X,Y}$.
		As a consequence, the points on $\pi(\M_0)\setminus\E$ are all interior points of $\pi(\M)$.
		
		By the properties of projections and intersections, and this last statement, we have the inclusion
		\begin{align*}
			\pi(\M)&=\pi\left(\bigcap_{p\in\M_0}\B(p,1)\right)\subset \bigcap_{p\in\M_0}\pi(\B(p,1))\\
			&=\bigcap_{p\in\pi(\M_0)} \B^3(p,1) = \bigcap_{p \in \bigcup \E} \B^3(p,1).
		\end{align*}
		We proceed by proving the reverse inclusion by contradiction.
		
		Assume there exists a point $P \in \bigcap_{p \in \pi(\M_0)} \B^3(p,1)$ such that $P \notin \pi(\M)$. This assumption implies that $\pi(\M_0)$ is contained in the closed unit ball $\B^3(P,1)$.
		
		Let $Q$ be the point in the set $\pi(\M)$ that is closest to $P$. Let $H_Q$ be the supporting half-space to $\pi(\M)$ at $Q$ that contains $\pi(\M)$ and whose exterior normal is parallel to the vector $P-Q$ (see Figure \ref{fig:shadowproof}). We now consider two cases for the location of $Q$.
		
		\begin{figure}
			\centering
			\includegraphics{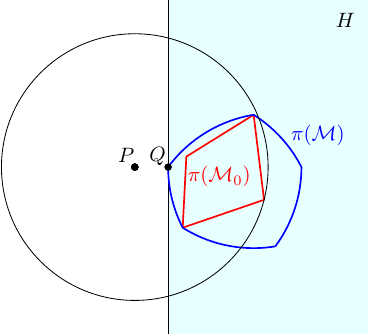}
			\caption{A representation of the relevant objects in the projection.}
			\label{fig:shadowproof}
		\end{figure}
		
		\noindent\textbf{Case 1: $Q$ is a smooth point of the boundary.}
		If $Q \in \partial\pi(\M) \setminus \pi(\M_0)$, then it is the projection of a smooth point $Q' \in \partial\M \setminus \M_0$. Since $\M$ is the intersection of unit balls, there is then a unique point $R \in \M_0$ such that $\norm{Q' - R}=1$. The vector $Q'-R$ is orthogonal to the supporting hyperplane of $\M$ at $Q'$ which is $\pi^{-1}(\partial H_Q)$, so $\pi(Q')-\pi(R) = Q - \pi(R)$. Thus, the points $P, Q,$ and $\pi(R)$ are collinear in that order and $\norm{P - \pi(R)} > 1$. This contradicts that $\pi(\M_0)$ must be contained in $\B^3(P,1)$.
		
		\noindent\textbf{Case 2: $Q$ is a non-smooth point of the boundary.}
		If $Q \in \pi(\M_0)$, then it must lie on one of the projected arcs, say $Q \in \E_{A,B}$. From our previous analysis, we know there exists another point $R \in \pi(\M_0)$ on the opposite arc $\E_{C,D}$ such that $\norm{Q-R}=1$. Our initial assumption requires that this point $R$ must lie inside the ball $\B^3(P,1)$ and in the supporting half-space $\mathcal{H}_Q$. However, there is no point in the set $\mathcal{H}_Q \cap \B^3(P,1)$ at distance of 1 from $Q$. This contradicts the existence of $R$.
		
		Since both cases lead to a contradiction, our initial assumption must be false, which completes the proof.
	\end{proof}

    The constant width body $\pi(\M)$ closely resembles the classical Reuleaux tetrahedron but is distinguished by its elliptical edges. A more precise analytical description of the surfaces that form the boundary in the neighborhood of these edges presents an interesting direction for future investigation.
    
    Now we prove that $\pi(\M)$ and the body $U_3$ defined in \cite{ABPR2025} coincide. In order to do this, we first recall the definition of the body $M_4$ from \cite{ABNPR2025}. While these bodies were originally constructed to have width $2$, we scale them to width $1$ to maintain consistency with our notation. Thus, $M_4$ is given by
    \begin{equation}
        M_4=\left\{v-w:v,w\in\R^4_+, \norm{v}^2+\left(\norm{w}+\frac{1}{\sqrt 2}\right)^2\le 1\right\}.
    \end{equation}
    Projecting orthogonally along the direction $E$ yields $U_3=\pi(M_4)$.

    \begin{prop}
        The constant width bodies $U_3$ and $\pi(\M)$ coincide.
    \end{prop}
    \begin{proof}
        First we note that every arc $S_{X,Y}$ is contained in $M_4$. Indeed, for every $v\in S_{X,Y}$ and $w=0$ we have
        \begin{equation*}
            \norm{v}^2+\left(\norm{w}+\frac{1}{\sqrt 2}\right)^2 = \frac{1}{2}+\left(\frac{1}{\sqrt{2}}\right)^2=1,
        \end{equation*}
        which implies that $v-w=v\in M_4$.

        It follows that $U_3=\pi(M_4)$ contains the same elliptical arcs $\E_{X,Y}$ that lie on the boundary of $\pi(\M)$. Since $U_3$ is a body of constant width $1$, it must be contained in $\bigcap_{p \in \E} \B^3(p,1)=\pi(\M)$. However, because $\pi(\M)$ is also a body of constant width $1$, the inclusion must be an equality. This shows that $U_3 = \pi(\M)$.
    \end{proof}

    After reading the previous proof, one may ask if $M_4$ and $\M$ also coincide, as they share the same symmetries and both contain the arcs $S_{X,Y}$. This is not the case. It is not difficult, for instance, to see that $E$ is not contained in $M_4$.

    \begin{table}
    \centering
    \begin{tabular}{|l| r @{.} l |}
        \hline
        \multicolumn{1}{|c|}{Body} & \multicolumn{2}{c|}{Volume} \\
        \hline
        Unit Sphere                & 0        & 5235987756 \\
        Reuleaux tetrahedron       & 0        & 4221577331 \\
        Average of Meissner bodies & 0        & 4208720183 \\
        Peabody                    & $\ge$ 0  & 4208257241 \\
        $U_3=\pi(\M)$              & 0        & 4204342424 \\
        Meissner bodies            & 0        & 4198600459 \\
        \hline
    \end{tabular}
    \caption{Some bodies of constant width $1$ and their volumes.}
    \label{tab:vols}
    \end{table}
    
    The authors of \cite{ABPR2025} posed an interesting open question. Among all bodies of constant width $1$ in $\R^3$ that possess the symmetry of the regular tetrahedron, which ones have minimal volume? In that same work, the authors numerically approximated the volumes of several $3$-dimensional bodies of constant width with tetrahedral symmetry. For completeness, we present these approximations in Table \ref{tab:vols}, together with the volumes of the Reuleaux tetrahedron and the Meissner bodies for comparison. We believe that the shadow body $U_3=\pi(\M)$ is a strong candidate for this minimal body.
     
	\section*{Acknowledgments}
	This work was supported by UNAM-PAPIIT IN111923. We are also grateful to Déborah Oliveros and Luis Montejano for fruitful discussions.

\end{document}